\documentclass[letterpaper, 10 pt, conference]{ieeeconference}

\IEEEoverridecommandlockouts
\overrideIEEEmargins

\usepackage{amsmath,amssymb,amsfonts,theorem}
\usepackage{graphicx,epsfig,color}
\usepackage{subfigure,psfrag}
\usepackage{url}
\usepackage{bm}
\usepackage{acronym}
\usepackage[color=white!90!black, textwidth=3cm]{todonotes}
\usepackage[switch, pagewise, mathlines]{lineno}
\usepackage{algorithm}												%
\usepackage{algpseudocode}											
\usepackage{pgfplots}


\newcommand{\until}[1]{\{1,\dots, #1\}}





\def\1{\mathbf{1}}

\newcommand\oprocendsymbol{\hbox{$\square$}}
\newcommand\oprocend{\relax\ifmmode\else\unskip\hfill\fi\oprocendsymbol}

\def\V{\mathcal{V}}
\def\E{\mathcal{E}}
\def\C{\mathcal{C}}

\def\N{\mathbb{N}}
\def\R{\mathbb{R}}


\newcommand{\real}{\ensuremath{\mathbb{R}}}





\newtheorem{theorem}{Theorem}[section]

\newtheorem{assumption}[theorem]{Assumption}

{\theorembodyfont{\rmfamily} 
\newtheorem{remark}[theorem]{Remark}

}

\newcommand	{\Section}				[0]	{Section}
\newcommand	{\Sections}				[0]	{Sections}

\newcommand	{\Figure}				[0]	{Figure}

\newcommand	{\Theorem}				[0]	{Theorem}

\newcommand	{\Algorithm}			[0]	{Algorithm}

\newcommand	{\Assumption}			[0]	{Assumption}
\newcommand	{\Assumptions}			[0]	{Assumptions}

\newcommand{\DefinedAs}				[0]	{\mathrel{\mathop:}=}
\newcommand{\IDefinedAs}			[0]	{=\mathrel{\mathop:}}

\newcommand{\cmax}					[1]	{\left[ #1 \right]_{c}}

\acrodef{ADMM}		[ADMM]			{Alternating Direction Method of Multipliers}
\acrodef{MC}		[MC]			{Monte Carlo}
\acrodef{NR}		[NR]			{Newton-Raphson}
\acrodef{NRC}		[NRC]			{Newton-Raphson Consensus}
\acrodef{aNRC}		[a-NRC]			{asynchronous Newton-Raphson Consensus}
\acrodef{raNRC}		[ra-NRC]		{robust asynchronous Newton-Raphson Consensus}
\acrodef{raAC}		[ra-AC]			{robust asynchronous Average Consensus}

\title{Analysis of Newton-Raphson Consensus for multi-agent convex optimization under asynchronous and lossy communications
\thanks{This result is part of projects that have received funding from the European Union's Horizon 2020 research and innovation programme under grant agreements n.\ 636834 (DISIRE) and n.\ 638992 (ERC Starting Grant OPT4SMART), and the Swedish research council Norrbottens Forskningsr{\aa}d.}}

\author{ Ruggero Carli \and Giuseppe Notarstefano \and Luca Schenato \and Damiano Varagnolo%
  \thanks{R.\ Carli and L.\ Schenato are with the Department of Information Engineering, University of Padova, Via Gradenigo 6/a, 35131 Padova, Italy {\tt\small \{ carlirug | schenato \}@dei.unipd.it}.}
  \thanks{Giuseppe Notarstefano is with the Department of Engineering, Universit\`a del Salento, Via per Monteroni, 73100 Lecce, Italy {\tt\small giuseppe.notarstefano@unisalento.it}.}
  \thanks{D.\ Varagnolo is with the Department of Computer Science, Electrical and Space Engineering, Lule{\aa} University of Technology, Forskargatan 1, 97187 Lule{\aa}, Sweden {\tt\small damiano.varagnolo@ltu.se}.}
  }

\begin{document}

\maketitle

\begin{abstract}
  We extend a multi-agent convex-optimization algorithm named Newton-Raphson consensus to a network scenario that involves directed, asynchronous and lossy communications. We theoretically analyze the stability and performance of the algorithm and, in particular, provide sufficient conditions that guarantee local exponential convergence of the node-states to the global centralized minimizer even in presence of packet losses. Finally, we complement the theoretical analysis with numerical simulations that compare the performance of the Newton-Raphson consensus against asynchronous implementations of distributed subgradient methods on real datasets extracted from open-source databases.
\end{abstract}

\acresetall 

\section{Introduction}
\label{sec:introduction}

Distributed optimization algorithms are important building blocks in several estimation and control problems arising in peer-to-peer networks. To cope with real-world requirements, these algorithms need to be designed to work under asynchronous, directed, faulty and time-varying communications. Unfortunately, despite being the literature on distributed optimization already rich, most of the existing contributions have been proved to work in networks whose communication schemes follow synchronous, undirected, and often time-invariant information exchange mechanisms.

Early references on distributed optimization algorithms involve primal subgradient iterations~\cite{Nedic2010constrained}. Sub-gradient based algorithms have the advantage of being simple to implement and suitable for non-differentiable cost functions. Moreover, they recently have been extended to directed and time-varying communication~\cite{lin2012distributed,Nedic2013distributed}. However, these algorithms exhibit sub-linear convergence rates.

More recently, primal subgradient strategies have been proposed with guaranteed convergence in directed communication graphs~\cite{gharesifard2014} and in time-varying event-triggered communication schemes~\cite{kia2014}. However, these schemes require weight-balanced graphs, an assumption that is difficult to be satisfied in the presence of lossy communication.

A second set of contributions is based on dual decomposition schemes. The related literature is very large and we refer to~\cite{yang2011distributed} for a comprehensive tutorial on network optimization via dual decomposition. A very popular dual distributed optimization algorithm that have improved robustness in the computation and convergence rate in the case of non-strictly convex functions is the so called \ac{ADMM}. A first distributed \ac{ADMM} implementation was initially proposed in~\cite{Schizas2008consensus}, and since then several works have appeared as accounted by the survey~\cite{boyd2011distributed}. Recently, contributions have been dedicated to increase the convergence speed of this technique by means of accelerated consensus schemes~\cite{ghadimi2014optimalparameter,teixeira2013optimal}. All these algorithms have been proved to converge to the global optimum under the assumption of fixed and undirected topologies.

Another class of distributed optimization algorithms exploits the exchange of active constraints among the network nodes. A constraints consensus algorithm has been proposed in~\cite{GN-FB:07z} to solve linear, convex and general abstract programs.  These were the first distributed optimization algorithms working under asynchronous and direct communication. Recently the constraint exchange idea has been combined with dual decomposition and cutting-plane methods to solve distributed robust convex optimization problems via polyhedral approximations \cite{burger2014polyhedral}. Although well-suited for asynchronous and directed communications, these algorithms mainly solve constrained optimization problems in which the number of constraints is much smaller than the number of decision variables (or vice-versa).  

Other optimization methods include algorithms that try to exploit second-order derivatives, i.e., the Hessians of the cost functions as in~\cite{wei2013newton1,wei2013newton2}, where the distributed optimization is applied to general time-varying directed graphs. Another approach, based on Newton-Raphson directions combined with consensus algorithms, has been proposed in~\cite{zanella2011newton}: this technique works under synchronous communication, and has recently been extended to asynchronous symmetric gossip frameworks~\cite{zanella2012asynchronous}.

Importantly, all the works mentioned above require reliable communication; and, to the best of our knowledge, there is no distributed optimization algorithm that has been proved to be guaranteed to converge in the presence of lossy communication. Aiming at filling this gap, we here extend the aforementioned Newton-Raphson consensus approach in~\cite{zanella2011newton,zanella2012asynchronous} to an asynchronous, directed and unreliable network set-up. Specifically, we design a distributed algorithm which works under an asynchronous broadcast protocol over a directed graph and that is robust with respect to packet losses.

The first main contributions of this paper is to endow the Newton-Raphson algorithm in~\cite{zanella2011newton} with two additional strategies: first, a push-sum consensus method, proposed in~\cite{benezit2010weighted} to achieve average consensus in directed networks; second, a robust consensus method, proposed in~\cite{Dominguez:11} to achieve average consensus in presence of packet losses through keeping memory of the total mass of the internal states of the algorithm, so that nodes can recognize if they missed some information at a certain point, and reconstruct it.

The rationale under the combination of the push-sum and robustification protocols with the Newton-Raphson consensus is the following. In the Newton-Raphson consensus, nodes continuously update estimates of a Newton descent direction by means of an average consensus, that forces the nodes to share a common descent direction. Thus, if this averaging property is maintained under asynchronous, directed and lossy communication, the convergence properties of the descent updates can be preserved.

The second main contribution of this paper is to show that, under suitable assumptions on the initial conditions and on the step-size parameter, the Newton-Raphson consensus is locally exponentially stable around the global optimum as soon as the local costs are $\mathcal{C}^{2}$ and strongly convex with second derivative bounded from below. The exponential convergence is achieved even in the presence of lossy and broadcast communication, as long as the communication graph is strongly connected and the number of consecutive packet losses is bounded. The proof relies on a time-scale separation of the Newton descent dynamics and the average consensus one. This result thus extends the findings of~\cite{ECC15}, where the convergence was proved for the quadratic local costs case.

The third main contribution of this paper is to complement the theoretical results with numerical simulations based on real datasets extracted from an open-source database. Findings then confirm the local exponential stability and the exponential rate of convergence on a problem where the local cost functions are smooth and convex.

The paper is organized as follows: \Section~\ref{sec:problem_formulation} formulates our problem and working assumptions. \Section~\ref{sec:aNRC} then introduces the proposed algorithm and gives some intuitions on the convergence properties of the scheme, which are then summarized in \Section~\ref{sec:theoretical_analysis}. Finally, \Section~\ref{sec:numerical_experiments} collects some numerical experiments corroborating the theoretical results.

\section{Problem Formulation and Assumptions}
\label{sec:problem_formulation}

\paragraph*{Problem formulation} we consider the separable optimization problem
\begin{equation}
	\label{eq:problem}
	x^{\ast} \DefinedAs \min_x \sum_{i=1}^N f_i(x)
\end{equation}
under the assumptions that each $f_i$ is known only to node $i$ and is $\C^{2}$, and strongly convex with second derivative bounded from below, i.e., $f_{i}''(x) > c$ for all $x$ (so that $f_{i}$ is coercive). For notational convenience and w.l.o.g.\ we deal with the scalar case, i.e., $x \in \real$.

We then aim at designing an algorithm solving~\eqref{eq:problem} with the following features: 
\begin{enumerate}
	\item \emph{being distributed}: each node has limited computational and memory resources and it is allowed to communicate directly only with its in- and out-neighbors; 
	\item \emph{being asynchronous}: nodes do not share a common reference time, but rather perform actions according to local clocks independent of each other;
	\item \emph{being robust w.r.t.\ packet losses}: packets broadcast by a node may sometimes be not received by its out-neighbors due to, e.g., collisions or fading effects.
\end{enumerate}

\paragraph*{Assumptions} formally, we consider a network representable through a given, fixed, directed and strongly connected graph $\mathcal{G} = (V,\E)$ with nodes $V=\left\{1,\ldots,N\right\}$ and edges $\E\subseteq \V \times \V$ so that $(i,j) \in \E$ iff node $j$ can directly receive information from node $i$. With $\mathcal{N}_i^{\text{out}}$ we denote the set of \emph{out-neighbors} of node $i$, i.e., $\mathcal{N}_i^{\text{out}} \DefinedAs \left\{j \in \V \; | \; (i,j) \in \E \right\}$ is the set of nodes receiving messages from $i$. Similarly, with $\mathcal{N}_i^{\text{in}}$ we denote the set of \emph{in-neighbors} of $i$, i.e., $\mathcal{N}_i^{\text{in}} \DefinedAs \left\{j \in \V \; | \; (j,i) \in \E\right\}$.

As for the concept of time, we assume that each node has its own clock that locally and independently triggers when to transmit. With $\sigma(t) \in \until{N}$, $t = 1, 2, \ldots$ be the sequence identifying the generic triggered node at time $t$, i.e., $\sigma(1)$ is the first triggered node, $\sigma(2)$ the second, etc., so that $\sigma(t)$ is a process on the alphabet $\until{N}$. When a node is triggered, it performs some local computation and then broadcasts some information to its out-neighbors. Due to unreliable communication links, this information can be potentially lost. 

We assume that to solve~\eqref{eq:problem} each node $i$ stores in its memory a local copy, say $x_i$ (also called \emph{local estimate} or \emph{local decision variable}), of the global decision variable $x$. With this new notation~\eqref{eq:problem} reads as
\begin{equation}
	\min_{x_1,\ldots, x_N} \; \sum_{i=1}^N f_i(x_i) \quad \text{s.t.} \; x_i = x_j \; \text{for all} \; (i,j) \in \E.
\label{eq:problem_rif}
\end{equation}
Notice that the strong connectivity of graph $\mathcal{G}$ ensures then that the optimal solution of~\eqref{eq:problem_rif} is given by $x_1=\ldots=x_N=x^*$, i.e., ensures that problems~\eqref{eq:problem} and~\eqref{eq:problem_rif} are equivalent.

\section{The \acl{raNRC} Algorithm}
\label{sec:aNRC}

We now introduce an algorithm suitable for solving problem~\eqref{eq:problem} under the asynchronous and lossy communication assumptions posed in \Section~\ref{sec:problem_formulation}. The procedure, called \ac{raNRC} and reported in \Algorithm~\ref{alg:raNRC}, has been initially presented in~\cite{ECC15} but is reported here for completeness and ease of reference. In the pseudo-code we assume w.l.o.g.\ $\sigma(t) = i$, i.e., that the node that triggers at iteration $t$ is the node $i$.

We assume that every node $i$ stores in its memory the variables $x_i$, $g_{i}$, $h_{i}$, $y_i$, $z_i$, $b_{i,y}$, $b_{i,z}$, and $r_{i,y}^{(j)}$, $r_{i,z}^{(j)}$ for every $j \in \mathcal{N}^{\text{in}}_i$, with the following meanings:
\begin{itemize}
	\item $x_i$ represents the current local estimate at node $i$ of the global minimizer $x^*$;
	\item $g_i$ and $h_i$ represent some specific function of the first and second derivatives of the local cost $f_i(x_i)$ computed at the current value of $x_i$. $g_i^{\text{old}}$ and $h_i^{\text{old}}$ represent the old values of $g_{i}$ and $h_{i}$ at the previous local step; 
	\item $y_i$ and $z_i$ represent respectively the local estimate at node $i$ of the global sums $\sum_i g_i$ and $\sum_i h_i$;
	\item $b_{i,y}$ and $b_{i,z}$ represent respectively quantities that are used by node $i$ to locally keep track of the total mass of the internal states $y_i$ and $z_i$. Notice that $b_{i,y}$ and $b_{i,z}$ are the only local variables that are broadcast by node $i$ to its out-neighbors;
	\item $r_{j,y}^{(i)}$ and $r_{j,z}^{(i)}$ represent respectively quantities that are used by node $j$ to locally keep track of the total mass of the internal states $y_i$ and $z_i$ of $i$, that are in general inaccessible by $j$. In other words, with $r_{j,y}^{(i)}$ and $r_{j,z}^{(i)}$ node $j$ tracks the status of node $i$: when the communication link from $i$ to $j$ does not fail, then node $j$ updates $r_{j,y}^{(i)}$ and $r_{j,z}^{(i)}$ with the received $b_{i,y}$ and $b_{i,z}$. Otherwise, when the communication link from $i$ to $j$ fails, then $r_{j,y}^{(i)}$ and $r_{j,z}^{(i)}$ remain equal to the previous total mass received.
\end{itemize} 
Thus the \ac{raNRC} algorithm builds on top of broadcast-like average consensus protocols~\cite{benezit2010weighted} (i.e., the structure of the updates of the variables $y_{i}$ and $z_{i}$) and of strategies for handling packets losses in consensus schemes~\cite{Dominguez:11} (i.e., the way of using the variables $b_{i,y}$, $b_{i,z}$, $r_{j,y}^{(i)}$ and $r_{j,z}^{(i)}$ to prevent information losses through mass-tracking robust strategies).

We also notice that the algorithm exploits the thresholding operator
\begin{equation*}
	\cmax{ z }
	\DefinedAs
	\left\lbrace
	\begin{array}{ll}
		z & \text{ if } z \geq c \\
		c & \text{ otherwise.} 
	\end{array}
	\right.
\end{equation*}
where $c$ is a positive scalar to be properly chosen to avoid division-by-zero in the algorithm. 

\begin{algorithm}
\caption{\acf{raNRC}}
    \begin{algorithmic}[1]
		\State on wake-up, and before transmission, node $i$ updates its local variables as
		\begin{align*}
		y_i & \leftarrow \frac{1}{|\mathcal{N}^{\text{out}}_i|+1}\left[y_i + g_i-g_i^{\text{old}}\right]\\
		z_i & \leftarrow \frac{1}{|\mathcal{N}^{\text{out}}_i|+1}\left[z_i + h_i-h_i^{\text{old}}\right]\\
		g_i^{\text{old}} &\leftarrow g_i\\
		h_i^{\text{old}} &\leftarrow h_i\\
		x_i & \leftarrow (1-\varepsilon) x_i + \varepsilon \frac{y_i}{\cmax{z_i}}\\
		g_i& \leftarrow f''_i(x_i)x_i-f'_i(x_i)\\
		h_i &\leftarrow f''_i(x_i)\\
		b_{i,y} & \leftarrow b_{i,y}+y_i\\
		b_{i,z} & \leftarrow b_{i,z}+z_i
		\end{align*}
		\State node $i$ then broadcasts $b_{i,y}$ and $b_{i,z}$ to its neighbors;
		\State every out-neighbor $j \in \mathcal{N}_i^{\text{out}}$ updates (if receiving the packet, otherwise it does nothing) its local variables as
		\begin{align*}
		y_j & \leftarrow b_{i,y}-r_{j,y}^{(i)}+ y_j+ g_j-g_j^{\text{old}} \\
		z_j & \leftarrow b_{i,z}-r_{j,z}^{(i)}+  z_j + h_j-h_j^{\text{old}}\\
		g_j^{\text{old}} &\leftarrow g_j\\
		h_j^{\text{old}} &\leftarrow h_j\\
		x_j & \leftarrow (1-\varepsilon) x_j + \varepsilon \frac{y_j}{\cmax{z_j}}\\
		g_j& \leftarrow f''_i(x_j)x_i-f'_i(x_j)\\
		h_j &\leftarrow f''_i(x_j)\\
		r_{j,y}^{(i)}& \leftarrow b_{i,y}\\
		r_{j,z}^{(i)}& \leftarrow b_{i,z}
		\end{align*}
    \end{algorithmic}
\label{alg:raNRC}
\end{algorithm}

\paragraph*{Initialization of the \ac{raNRC} algorithm} we assume that every agents perform the following initialization step of the local variables: let $x^o$ be a common initial estimate of the global minimizer (may be chosen equal to zero for convenience). Then
\begin{align*}
	&x_i = x^o\\
	&y_i = g_i^{\text{old}}=g_i=f''_i(x^o)x^o-f'_i(x^o) \IDefinedAs y_{i}^{o} \\
	&z_i = h_i^{\text{old}}=h_i=f''_i(x^o) \IDefinedAs z_{i}^{o} . \\
\end{align*}

\section{Informal description of the convergence properties of the algorithm}
\label{sec:informal_description_of_the_convergence_properties_of_the_algorithm}

We now provide an intuitive verbal description of the main features and intuitions behind the proposed algorithm, before presenting a mathematical characterization in the following \Section~\ref{sec:theoretical_analysis}.

We start by noticing that the only free parameter of the algorithm is given by the scalar $\varepsilon \in (0,1]$. This parameter is fundamental since it regulates the trade-off between the stability of the algorithm and its speed of convergence. Indeed the algorithm is characterizable through two distinct dynamics: a fast one, which distributedly computes averages of the $y_{i}$'s and $z_{i}$'s based on a robust consensus algorithm, and a slow dynamics, that estimates the minimizer of the global cost function using the ratio of the averaged $y_{i}$'s and $z_{i}$'s as a Newton direction. More specifically, the variables $x_i$ are associated to the slow dynamics, while all the other variables $y_i,z_i,g_i,h_i,b_{i,z},b_{i,y},r^{(i)}_{i,y},r^{(i)}_{i,z}$ are associated to the fast dynamics.

The parameter $\varepsilon$ regulates then the separation of these two time scales: the smaller $\varepsilon$ is, the larger this separation is, so that small $\varepsilon$'s imply slow distributed averaging of the $y_{i}$'s and $z_{i}$'s. On the other hand, the rate of convergence of the slow dynamics, i.e., of the Newton-Raphson on the $x_{i}$'s, can be shown to be locally given by $(1-\varepsilon)$; therefore small $\varepsilon$'s imply also slower convergence towards the global optimum. 

In the following we use the symbol $\to$ to indicate the behavior of a certain variable as the number of iterations of \Algorithm~\ref{alg:raNRC} goes to infinity, while we reserve $\leftarrow$ for denoting values assignment operations (e.g., $x_{i} \leftarrow x^{o}$ reads as ``variable $x_{i}$ assumes the value $x^{o}$'').

\subsection{Intuitions behind the fast dynamics: the case $\varepsilon = 0$}

As $\varepsilon$ approaches zeros, $x_i$ changes very little from one iteration to the other, i.e., $x_i\approx \mathrm{cost.}$. Indeed if we assume $\varepsilon = 0$, then the local estimate update rule becomes $x_i \leftarrow x_i$, so that $g_i \leftarrow g_i^{\text{old}}$ and $h_i \leftarrow h_i^{\text{old}}$, i.e., constant values. Therefore in this case the dynamics of $y_i$ only depends on its initial value $f''_i(x_i)x_i - f_i'(x_i)$ and on the communication sequence. Similar considerations hold for $z_i$'s. Thus in this case the variables $y_i$ and $z_i$ evolve as the robust ratio consensus described in~\cite{Dominguez:11}, i.e.,
$$
	y_i \to \rho_i \left( \frac{1}{N} \sum_{i = 1}^{N} \Big( f''_i(x_i)x_i- f_i'(x_i) \Big) \right)
$$
$$
z_i \to \rho_i  \left( \frac{1}{N} \sum_{i = 1}^{N} f''_i(x_i) \right)
$$
where $0 < \rho_i \leq 1$ is some scalar that depends on the packet loss sequence. Thus, regardless of the specific communications and packet losses sequence,
$$
	\frac{y_i}{z_i} \to \frac{\sum_i f''_i(x_i)x_i-f_i'(x_i)}{\sum_i f''_i(x_i)}=:\phi(x_1,\ldots,x_N)
$$
i.e., all the local ratios $\displaystyle \frac{y_i}{z_i}$ converge to the same value $\phi$. 

\subsection{Intuitions behind the slow dynamics: the case $\displaystyle \frac{y_i}{z_i} = \phi(x_1,\ldots,x_N)$}

The slow dynamics can be obtained by assuming that the fast dynamics has converged to steady-state value considering $\varepsilon=0$. The idea is that if $\varepsilon\approx 0$, then also $\displaystyle \frac{y_i(k)}{z_i(k)}\approx\phi(x_1,\ldots,x_N)$. In this scenario, the dynamics of each local variable $x_i$ can then be written as
$$
	x_i \leftarrow (1-\varepsilon) x_i + \varepsilon \phi(x_1,\ldots,x_N), \quad i = 1,\ldots,N .
$$
This implies that all the various agents update the local values with the same identical rule; thus nodes behave in this case as $N$ identical systems that are driven by the same forcing term. This implies that any difference in the initial value of  $x_i$ will vanish, eventually leading to
$$
	x_i \to  x, \quad \forall i=1,\ldots, N.
$$
In this case, moreover,
$$
\phi(x_1,\ldots,x_N) \to \frac{\sum_i f''_i(x)x-f_i'(x)}{\sum_i f''_i(x)}=x-\frac{\overline{f}' (x)}{\overline{f}''(x)}
$$
where $\overline{f}(x) \DefinedAs \sum_i f_i(x)$. Thus the dynamics of the local variables are of the form
$$
	x
	\leftarrow
	(1-\varepsilon)x + \varepsilon \left(x-\frac{\overline{f}' (x)}{\overline{f}''(x)}\right)
	=
	x - \varepsilon \frac{\overline{f}' (x)}{\overline{f}''(x)} ,
$$
i.e., a Newton-Raphson algorithm that, under the posed smoothness assumptions on the local $f_{i}$'s, converges to the solution of~\eqref{eq:problem}. Thus,
$$
	x_i \to x^* \qquad \forall i=1,\ldots, N, \quad \forall x^{o} \in \real.
$$

\subsection{Intuitions behind the local rate of convergence $1-\varepsilon$}

The previous analysis allows to estimate the rate of convergence around the global minimum $x^*$. In fact, if we assume a sufficiently large separation of time scales (i.e., the average consensus on the $y_{i}$'s and $z_{i}$'s to be much faster than the Newton-Raphson dynamics), then the rate of convergence of the whole algorithm is dominated by the slow dynamics. If then one further assumes the $f_i(x)$ to be $\mathcal{C}^{3}$ then the Newton-Raphson dynamics can be linearized so to obtain
$$
	\left. \frac{d}{dx}  \frac{\overline{f}' (x)}{\overline{f}''(x)} \right|_{x=x^*}
	=
	\left.
		\frac{\overline{f}''(x)}{\overline{f}''(x)}
		-
		\frac{\overline{f}' (x)\overline{f}'''(x)}{(\overline{f}''(x))^2} \right|_{x=x^*}
	=
	1
$$
where we used the fact that $\overline{f}'(x^*)=0$. Therefore, the dynamics of the Newton-Raphson component of the algorithm around the equilibrium point $x^*$ can be written as
$$
	x^+ \approx x - \varepsilon (x-x^*) \Rightarrow (x - x^*)^+ \approx (1-\varepsilon)(x - x^*),
$$
which clearly shows that locally the rate of convergence is exponential with a rate given by $(1-\varepsilon)$. This confirms the previous intuition that smaller $\varepsilon$'s lead to slower convergence rates.

\subsection{Intuitions behind the stability properties of the \ac{raNRC} algorithm}

As discussed above, $\varepsilon$ dictates the relative speed of the fast dynamics (driving the variables $y_{i}$ and $z_{i}$ to a consensus), and the slow dynamics for the Newton-Raphson-like evolution of the local estimates $x_{i}$.  The parameter $\varepsilon$, moreover, dictates how much each node $i$ trusts $\displaystyle \frac{y_{i}}{z_{i}}$ as a valid Newton direction. During the transient, indeed, this ratio is not the Newton direction of neither the local nor the global cost computed at the current $x_{i}$.

Clearly, if the consensus on the $y_{i}$'s and $z_{i}$'s is much faster than the evolution of the $x_{i}$'s (i.e., if $\varepsilon$ is ``small enough'') then one can expect that the aforementioned separation of time scales holds, so that all the quantities converge to their equilibria and the overall algorithm converges. But if $\varepsilon$ is not sufficiently small then the stability of the overall system is not guaranteed: indeed, in the following section we prove that there always exists a suitable critical value $\varepsilon_c$ such that for all  $0<\varepsilon<\varepsilon_c$ the algorithm is locally exponential stable, while nothing can be said for $\varepsilon > \varepsilon_{c}$.

Notice that estimating (even offline) such $\varepsilon_c$ is a very difficult task, and that explicit bounds are often very conservative. Unfortunately, moreover, the difficulty of finding conservative bounds on $\varepsilon_{c}$ conflicts with the practical necessity of having high $\varepsilon$'s (the higher $\varepsilon$, the faster the algorithm converges -- if converging -- to the optimum).

\section{Theoretical analysis of the \acl{raNRC}}
\label{sec:theoretical_analysis}

We now provide a theoretical analysis of the proposed algorithm under asynchronous and lossy communication scenarios. In particular we provide some sufficient conditions that guarantee local exponential stability under the assumptions posed in \Section~\ref{sec:problem_formulation}. We thus extend our previous work~\cite{ECC15}, dedicated to the quadratic local costs case, to more generic local convex costs.

Informally, we assume that each node updates its local variables and communicates with its neighbors infinitely often, and that the number of consecutive packet losses is bounded. Formally, we assume that:
\begin{assumption}[Communications are persistent]
	For any iteration $t \in \N $ there exists a positive integer number $\tau$ such that each node performs at least one broadcast transmission within the interval $[t, t+\tau]$, i.e., for each $i \in \until{N}$ there exists $t_i \in [t, t+\tau]$ such that $\sigma(t_i)=i$.
	\label{ass:persistent_communications}
\end{assumption}
\begin{assumption}[Packet losses are bounded]
	There exists a positive integer $L$ such that the number of consecutive communication failures over every directed edge in the communication graph is smaller than $L$.
	\label{ass:bounded_packet_losses}
\end{assumption}

Notice that \Assumption~\ref{ass:bounded_packet_losses} could be relaxed, in the sense that we may allow links to fail permanently as soon as the remaining network still exhibits strong connectivity. The following result summarizes our characterization of the convergence properties of the \ac{raNRC} algorithm:
\begin{theorem}
	Under \Assumptions~\ref{ass:persistent_communications},~\ref{ass:bounded_packet_losses} and the assumptions posed in \Section~\ref{sec:problem_formulation} there exist some positive scalars $\varepsilon_c$ and $\delta$ s.t.\ if the initial conditions $x^o \in \R$ satisfy $|x^o-x^*|<\delta$ and if $\varepsilon$ satisfies $0 < \varepsilon < \varepsilon_c$ then the local variables $x_i$ in \Algorithm~\ref{alg:raNRC} are exponentially stable w.r.t.\ the global minimizer $x^*$.
	\label{thm:convergence}
\end{theorem}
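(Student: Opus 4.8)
The plan is to treat Algorithm~\ref{alg:raNRC} as a discrete-time, time-varying (switched) dynamical system and to establish local exponential stability through a two-time-scale (singular-perturbation) argument that makes rigorous the informal picture of \Section~\ref{sec:informal_description_of_the_convergence_properties_of_the_algorithm}. First I would collect all the per-node variables $x_i, y_i, z_i, g_i, h_i, g_i^{\text{old}}, h_i^{\text{old}}, b_{i,y}, b_{i,z}$ together with the mass-tracking variables $r_{j,y}^{(i)}, r_{j,z}^{(i)}$ into a single stacked state, and write one update operator per triggering event $\sigma(t)=i$ together with its associated (possibly failing) receptions. Assumptions~\ref{ass:persistent_communications} and~\ref{ass:bounded_packet_losses} guarantee that over any window of length $T$, determined by $\tau$ and $L$, every node broadcasts and every non-permanently-failing link transmits successfully at least once; I would therefore pass to the composite map over such epochs, reducing the analysis to studying a product of epoch-transition operators. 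The equilibrium of interest is $x_i = x^*$ for all $i$, with the fast variables sitting at the robust-consensus steady state dictated by the packet-loss sequence.

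Next I would split the state into a slow part (the estimates $x_i$, whose update carries the factor $\varepsilon$) and a fast part (all remaining variables). Setting $\varepsilon=0$ freezes the $x_i$'s, so the fast subsystem becomes exactly the robust ratio consensus of~\cite{Dominguez:11}: I would show that, restricted to the directions transversal to the conserved total masses of $y$ and $z$, the product of epoch matrices is a strict contraction toward the manifold on which all ratios $y_i/z_i$ coincide with $\phi(x_1,\dots,x_N)$, uniformly over admissible communication/loss sequences. This follows from the primitivity of the associated column-stochastic-like products implied by strong connectivity and persistence. In parallel, the reduced system obtained by substituting $y_i/z_i=\phi$ is the scalar Newton-Raphson map $x \mapsto x - \varepsilon\,\overline{f}'(x)/\overline{f}''(x)$; here $\overline{f}''>Nc>0$ is continuous, and a mean-value argument (which needs only $\mathcal{C}^2$, not $\mathcal{C}^3$) gives $\overline{f}'(x)/\overline{f}''(x) = (x-x^*)\bigl(1+o(1)\bigr)$ near $x^*$, so the reduced map is a local contraction with rate $1-\varepsilon$, while differences among the $x_i$'s contract because all nodes obey the same forced recursion.

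I would then combine the two subsystems over one epoch. Linearizing the composite epoch map at the equilibrium yields a block structure in which the fast block is the contraction established above, the slow block contributes the factor $1-\varepsilon$ to leading order, and the off-diagonal coupling terms are $\mathcal{O}(\varepsilon)$. Invoking a discrete-time two-time-scale estimate, either a composite Lyapunov function $V = V_{\text{slow}} + m\,V_{\text{fast}}$ with a suitable weight $m$, or a direct Schur/perturbation bound on the joint spectral radius, I would conclude that for every $\varepsilon$ below a threshold $\varepsilon_c$ the joint epoch map is a contraction on a neighborhood of the equilibrium, so the full trajectory converges exponentially to $x_i=x^*$ whenever $|x^o-x^*|<\delta$. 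The numbers $\varepsilon_c$ and $\delta$ are then fixed so that the neglected higher-order and coupling terms stay dominated by the contraction margins of the two subsystems.

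The main obstacle, and where I would spend most of the effort, is that classical singular-perturbation theorems presume smooth, time-invariant dynamics, whereas here the system is switched, discrete-time and only persistently (not everywhere) exciting. The crux is to obtain the fast contraction and the $\mathcal{O}(\varepsilon)$ coupling bound \emph{uniformly} over the whole family of admissible triggering and loss sequences, so that a single $\varepsilon_c$ serves them all; equally delicate is correctly quotienting out the neutral directions associated with the conserved total masses of the robust consensus, since the fast dynamics contracts only transversally to them and it is the ratio $y_i/z_i$, rather than $y_i$ and $z_i$ individually, that must be shown to stabilize.
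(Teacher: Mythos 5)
Your proposed two-time-scale (singular-perturbation) decomposition --- a fast robust ratio-consensus subsystem on the $y_i,z_i$ that contracts transversally to the conserved masses, driving a slow Newton--Raphson recursion on the $x_i$ with local rate $1-\varepsilon$, glued together by a composite contraction estimate uniform over admissible triggering/loss sequences --- is exactly the strategy the paper itself announces (``the proof relies on a time-scale separation of the Newton descent dynamics and the average consensus one''), and your identification of the key difficulties (uniformity over switching sequences and quotienting the neutral mass directions so that only the ratios $y_i/z_i$ need stabilize) matches the structure suggested by \Section~\ref{sec:informal_description_of_the_convergence_properties_of_the_algorithm}. Be aware, though, that this paper does not actually contain the proof --- it defers all technical details to the companion report \cite{CDC15} --- so your sketch can only be validated against the declared outline, with which it is fully consistent.
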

\begin{proof} The proof of this theorem is quite involved and relies on many intermediate results. In the interest of space we refer the interested reader to a longer version of this work, \cite{CDC15}, including all the technical details in a dedicated Appendix. 
\end{proof}

Introducing the notation $x_{i}(t)$ to indicate the value $x_{i}$ after the $t$-th broadcast event in the whole network, \Theorem~\ref{thm:convergence} reads as follows: if the hypotheses are satisfied then there exist positive scalars $C$ and $\lambda<1$, possibly function of $\delta$ and $\varepsilon$, s.t.
$$
	|x_i(t)-x^*|\leq C \lambda^t ,
	\qquad
	t = 1, 2, \ldots
$$

\begin{remark}
	\Algorithm~\ref{alg:raNRC} assumes the initial conditions of the local variable $x_i$ to be all identical to $x^o$. Although being not a very stringent requirement, this assumption can be relaxed. I.e., slightly modified versions of \Theorem~\ref{thm:convergence} would hold even in the case $x_{i} = x_{i}^{o}$ as soon as all the initial conditions are sufficiently close to the global minimizer $x^*$, i.e., as soon as $|x_i^o-x^*|<\delta$ for all $i = 1, \ldots, N$.
\end{remark}

\begin{remark}
	The initial conditions on the local variables $y_i = g_i^{\text{old}}=g_i=f''_i(x^o)x^o-f'_i(x^o)$ and $z_i = h_i^{\text{old}}=h_i=f''_i(x^o)$ are instead more critical for the convergence of the local variables $x_i$ to the true minimizer $x^*$. As shown in~\cite{zanella2011newton}, small perturbations of these initial conditions can lead to convergence to a point $\overline{x} \neq x^*$ (notice that these perturbations do not affect the stability of the algorithm, so that possible small numerical errors due to the computation and data quantization do not disrupt the convergence properties of the algorithm). Nonetheless the map from the amplitude of these perturbations and the distance $\left\| \overline{x} - x^* \right\|$ is continuous, so that if these perturbations are small then $\overline{x} \approx x^*$.
\end{remark}

\begin{remark}
Although the previous theorem guarantees only local exponential convergence, numerical simulations on real datasets seem to indicate that the basin of attraction is rather large and stability is mostly dictated by the choice of the parameter $\varepsilon$.
\end{remark}

\section{Numerical Experiments}
\label{sec:numerical_experiments}

First, we empirically study the sensitivity of the convergence speed of the proposed \ac{raNRC} algorithm on $\varepsilon$ and on the packet loss probability in \Sections~\ref{ssec:empirical_analysis_of_the_effects_of_varepsilon} and~\ref{ssec:empirical_analysis_of_the_effects_of_packet_losses}, respectively. Then, we compare in \Section~\ref{ssec:convergence_speeds_comparisons} the convergence speed of the \ac{raNRC} against the speed of asynchronous subgradient schemes.

We consider the network depicted in \Figure~\ref{fig:network_NRC_vs_SG} and apply our algorithm in the context of robust regression using real-world data. Specifically we consider a database $\mathcal{D}$ containing financial information on various houses. To each house $j$ there is associated an output variable $y_{j} \in \real$, which indicates its monetary value, and a vector $\chi_{j} \in \real^{n}$, which represents $n$ numerical attributes of the $j$-th house (e.g., per capita crime rate by town, index of accessibility to radial highways, etc.). The database is distributed, i.e., the set $\mathcal{D}$ comes from $N$ different sellers that do not want to disclose their private information. More specifically, each seller $i$ owns a subset $\mathcal{D}_i$ of the global dataset $\mathcal{D}$ so that $\cup_i \mathcal{D}_i = \mathcal{D}$. Nonetheless sellers want to collectively build an estimator of the prices of new houses that is based on all the information possessed by the peers. An approach to solve this distributed regression problem is to solve an optimization problem where the local costs are given by the smooth Huber costs
\begin{equation}
    f_i\left( x \right)
    \DefinedAs
    \sum_{j \in \mathcal{D}_i}
    \frac
    {
        \left(
            y_{j} - \chi_{j}^T x - x_{0} 
        \right)^{2}
    }
    {
        \left|
            y_{j} - \chi_{j}^T x - x_{0} 
        \right|
        +
        \beta
    }
    +
    \gamma \left\| x \right\|^{2}_{2}
    \label{equ:housing_regression}
\end{equation}
where $\gamma$ is a global regularization parameter that is, for our purposes, considered to be known to all agents. We then consider a dataset $\mathcal{D}$ with $|\mathcal{D}| = 500$ elements from the Housing UCI repository\footnote{\texttt{http://archive.ics.uci.edu/ml/datasets/Housing}}, randomly assigned to $N=15$ different users communicating as in graph of \Figure~\ref{fig:network_NRC_vs_SG}. For each element we consider $n = 9$ features (the first 9 ones in the database), so that the corresponding optimization problem is 10-dimensional. The centralized optimum $x^{\ast}$ for this problem has been computed using a centralized \ac{NR} scheme with Newton step chosen with backtracking, and terminating when the Newton decrement was $< 10^{-9}$.

\begin{figure}[!htbp]
	\centering
	\includegraphics{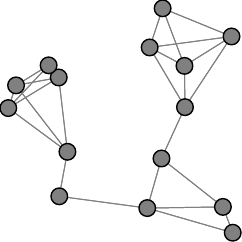}
\caption{A random geometric graph with connectivity radius 0.35.}
\label{fig:network_NRC_vs_SG}
\end{figure}

\subsection{Empirical analysis of the effects of $\varepsilon$ on the convergence speed of the \ac{raNRC} algorithm}
\label{ssec:empirical_analysis_of_the_effects_of_varepsilon}

We consider a probability of packet losses fixed to $0.1$, and a $\varepsilon$ that ranges in $\{ 10^{-4}, 10^{-3}, 10^{-2}, 10^{-1} \}$, and compare in \Figure~\ref{fig:comparison_of_average_errors_for_NR_for_different_varepsilon_and_fixed_packet_losses} the evolution of the average errors for different values of $\varepsilon$. We notice how the results agree with the intuitions developed in the previous sections: first, there may be some $\varepsilon$'s (e.g., $\varepsilon = 10^{-1}$) that lead to non converging behaviors. Second, choosing a too conservative $\varepsilon$ (e.g., $\varepsilon = 10^{-4}$ or $\varepsilon = 10^{-3}$) leads to a very slow convergence since $\displaystyle x_i \leftarrow (1-\varepsilon) x_i + \varepsilon \frac{y_i}{\cmax{z_i}}$.

\begin{figure}[!htbp]
	\centering
	\includegraphics{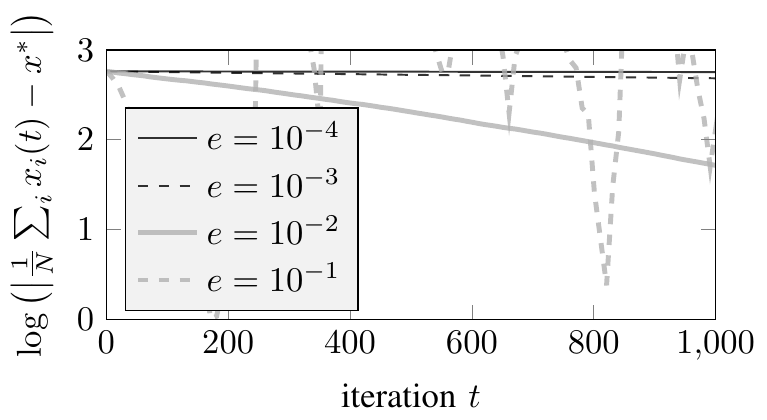}
\caption{Comparison of the evolutions of the trajectories of the average errors for different values of $\varepsilon$ and a packets loss probability $p = 0.1$.}
\label{fig:comparison_of_average_errors_for_NR_for_different_varepsilon_and_fixed_packet_losses}
\end{figure}

\subsection{Empirical analysis of the effects of packet losses on the convergence speed of the \ac{raNRC} algorithm}
\label{ssec:empirical_analysis_of_the_effects_of_packet_losses}

We consider a parameter $\varepsilon$ fixed to $0.01$, and a probability of packet losses that ranges in $\{ 0, 0.2, 0.4, 0.6 \}$, so to compare in \Figure~\ref{fig:comparison_of_average_errors_for_NR_for_different_packet_losses_and_fixed_varepsilon} the evolution of the average errors for different packets unreliability levels. We notice that, as expected, the severity of the packet losses negatively affects the convergence speed. Nonetheless the overall slowing effect is not disruptive, in the sense that even severe packet loss probabilities (namely, 0.6) do not lead to meaningless estimates.

\begin{figure}[!htbp]
	\centering
	\includegraphics{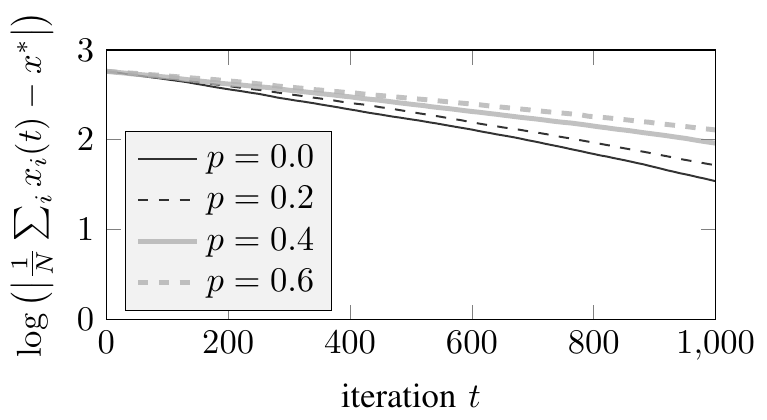}
	\caption{Comparison of the evolutions of the trajectories of the average errors for the \ac{raNRC} algorithm for different values of the packet loss probabilities and $\varepsilon = 0.01$.}
\label{fig:comparison_of_average_errors_for_NR_for_different_packet_losses_and_fixed_varepsilon}
\end{figure}

\subsection{Convergence speeds comparisons}
\label{ssec:convergence_speeds_comparisons}

We consider the asynchronous subgradient scheme reported in \Algorithm~\ref{alg:SG}, and numerically compare its convergence properties against the proposed \ac{raNRC} scheme under a packet losses probability equal to $0.1$.

\begin{algorithm}
\caption{Distributed Subgradient}
    \begin{algorithmic}[1]
		\State on initialization, each node $i$ initializes $x_{i}$ as $x_{i}^{o}$ and $t_{i}$ (the local counter of the number of updates) to $1$;
		\State on wake up, node $i$ broadcasts $x_{i}$ and $f_{i} \left( x_{i} \right)$ to all its neighbors;
		\State every out-neighbor $j \in \mathcal{N}_i^{\text{out}}$ updates (if receiving the packet, otherwise it does nothing) its local variables as
		\begin{align*}
			x_{j} & \leftarrow \frac{1}{2} \left( x_{i} + x_{j} \right) + \frac{\alpha}{t_{j}} \big( f_{i} (x_{i}) + f_{j} (x_{j}) \big) \\
			t_{j} & \leftarrow t_{j} + 1 
		\end{align*}
    \end{algorithmic}
\label{alg:SG}
\end{algorithm}

For both algorithms we compute, through gridding, that parameter ($\varepsilon$ for the \ac{raNRC}, $\alpha$ for the subgradient) that leads to the best performance in terms of convergence speed of the average guess over the various agents. We then report the evolution of the average guess over time in \Figure~\ref{fig:comparison_of_average_errors_trajectories_for_smooth_huber_costs}, and notice how the higher order information used by the \ac{raNRC} scheme over the subgradient one positively affects the asymptotic convergence speed of the procedure.

\begin{figure}[!htbp]
	\centering
	\includegraphics{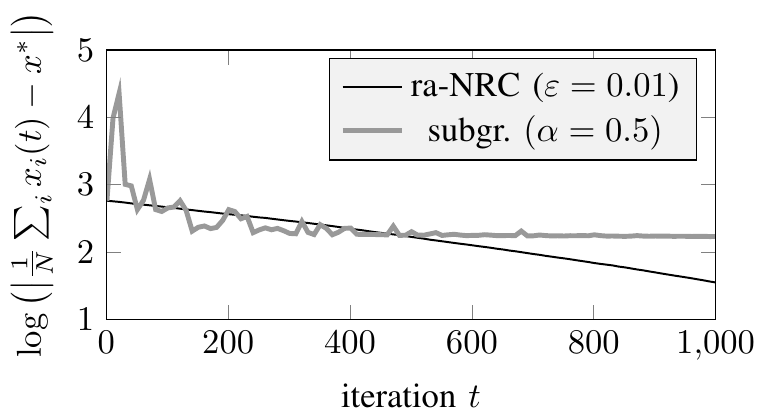}
\caption{Comparison of the evolutions of the trajectories of the average errors for the algorithms tuned with their best parameters and a packet loss probability $p = 0.1$.}
\label{fig:comparison_of_average_errors_trajectories_for_smooth_huber_costs}
\end{figure}

\section{Conclusions}
\label{sec:conclusions}

Implementations of distributed optimization methods in real-world scenarios require strategies that are both able to cope with real-world problematics (like unreliable, asynchronous and directed communications), and converge sufficiently fast so to produce usable results in meaningful times. Here we worked towards this direction, and improved an already existing distributed optimization strategy, previously shown to have fast convergence properties, so to make it tolerate the previously mentioned real-world problematics.

More specifically, we considered a robustified version of the Newton-Raphson consensus algorithm originally proposed in~\cite{zanella2011newton} and proved its convergence properties under some general mild assumptions on the local costs. From technical perspectives we shown that under suitable assumptions on the initial conditions, on the step-size parameter, on the connectivity of the communication graph and on the boundedness of the number of consecutive packet losses, the considered optimization strategy is locally exponentially stable around the global optimum as soon as the local costs are $\mathcal{C}^{2}$ and strongly convex with second derivative bounded from below.

We also shown how the strategy can be applied to real world scenarios and datasets, and be used to successfully compute optima in a distributed way.

We then notice that the results offered in this manuscript do not deplete the set of open questions and plausible extensions of the Newton Raphson consensus strategy. We indeed devise that the algorithm is potentially usable as a building block for distributed interior point methods, but that some lacking features prevent this development. Indeed it is still not clear how to tune the parameter $\varepsilon$ online so that the convergence speed is dynamically adjusted (and maximized), how to account for equality constraints of the form $Ax = b$, and how to update the local variables $x_{i}$ using partition-based approaches so that each agent keeps and updates only a subset of the components of $x$.



\end{document}